\date{\today}
\theoremstyle{theorem}
    \newtheorem{theorem}{Theorem}
    \newtheorem{lemma}{Lemma}
    \newtheorem{corollary}[theorem]{Corollary}
\theoremstyle{definition} 
    \newtheorem{fact}{Fact}
    \newtheorem{remark}{Remark}
    \newtheorem{example}[theorem]{Example}
    \newtheorem{exercise}[theorem]{Exercise}
\def\suchthat{\; : \;}
\def\d{\delta}
\def\e{\epsilon}
\def\vp{\varphi}
\def\C{\mathbb{C}}
\def\D{\mathbb{D}}
\def\Z{\mathbb{Z}}
\def\N{\mathbb{N}}
\def\R{\mathbb{R}}
\def\Z{\mathbb{Z}}
\def\l{\left}
\def\r{\right}
\def\<{\langle}
\def\>{\rangle}
\newcommand{\one}{{\mathbf 1}}
\newcommand{\E}{\mbox{\bf E}}
\def\bar{\overline}
\def\P{{\bf P}}
\newcommand\Tr{{\mbox{Tr}}}
\newcommand\mnote[1]{} 
\newcommand\be{\begin{equation*}}
\newcommand\ee{\end{equation*}}
\newcommand\ben{\begin{equation}}
\newcommand\een{\end{equation}}
\newcommand\bes{\begin{eqnarray*}}
\newcommand\ees{\end{eqnarray*}}
\newcommand\bex{\begin{exercise}}
\newcommand\eex{\end{exercise}}
\newcommand\beg{\begin{example}}
\newcommand\eeg{\end{example}}
\newcommand\benu{\begin{enumerate}}
\newcommand\eenu{\end{enumerate}}
\newcommand\beit{\begin{itemize}}
\newcommand\eeit{\end{itemize}}
\newcommand\berk{\begin{remark}}
\newcommand\eerk{\end{remark}}
\newcommand\bdefn{\begin{defintion}}
\newcommand\edefn{\end{definition}}
\newcommand\bthm{\begin{theorem}}
\newcommand\ethm{\end{theorem}}
\newcommand\bprf{\begin{proof}}
\newcommand\eprf{\end{proof}}
\newcommand\blem{\begin{lemma}}
\newcommand\elem{\end{lemma}}
\newcommand{\sm}{{\raise0.3ex\hbox{$\scriptstyle \setminus$}}}
\def\l{\left}
\def\r{\right}
\def\CHI{\mathchoice%
{\raise2pt\hbox{$\chi$}}%
{\raise2pt\hbox{$\chi$}}%
{\raise1.3pt\hbox{$\scriptstyle\chi$}}%
{\raise0.8pt\hbox{$\scriptscriptstyle\chi$}}}
\def\smalloplus{\raise1pt\hbox{$\,\scriptstyle \oplus\;$}}
\title[LSD of the product of truncated Haar unitary matrices]{Limiting spectral distribution of the product of truncated Haar unitary matrices}
\author{Kartick Adhikari}
\address{Stat-Math Unit\\
        Indian Statistical Institute\\
        Kolkata 700108, India}
\email{kartickmath [at] gmail.com}
\author{Arup Bose}
\address{Stat-Math Unit\\
        Indian Statistical Institute\\
        Kolkata 700108, India}
\email{bosearu [at] gmail.com}
\date{\today}
\thanks{The work of Kartick Adhikari is partially supported by National Post-Doctoral Fellowship, India, with reference no. {\bf PDF/2016/001601}.  The work of Arup Bose is  supported by J. C. Bose National Fellowship, Department of Science and Technology, Government of India}
\begin{document}
\begin{abstract}
Let $A_m^{(1)},\ldots, A_m^{(k)}$ be $m\times m$ left-uppermost blocks of $k$ independent $n\times n$ Haar unitary matrices 
where $\frac{n}{m}\to \alpha$ as $m\to \infty$,  with $1<\alpha<\infty$.
Using free probability and  Brown measure techniques,  we find the limiting spectral distribution of $A_m^{(1)}\cdots A_m^{(k)}$.  
\end{abstract}

\maketitle

\section{Introduction and main results}

Let $\mathcal U_n$ be the compact group of $n\times n$ unitary matrices. The Haar probability
measure $\nu_n$ on $\mathcal U_n$  is bi-invariant. An $n\times n$  unitary random matrix $U_n$ is said
to be {\it a Haar unitary matrix} if its distribution is $\nu_n$.

 Let $A_n$ be an $n\times n$ random matrix with eigenvalues $\lambda_1,\ldots, \lambda_n$. Then 
its {\it empirical spectral measure} of $A_n$  is the probability measure 
$\frac{1}{n}\sum_{k=1}^n\d_{\lambda_k}$, where $\d_x$ is the Dirac delta measure at $x$. Equivalently, its {\it empirical spectral distribution} (ESD) is given by
\begin{align*}
F^{A_n}(x,y)=\frac{|\{k\;:\; \Re(\lambda_k)\le x, \Im(\lambda_k)\le y\}|}{n},\;\;\;\mbox{ for $x,y\in \R$},
\end{align*} 
where $|\cdot|$ denotes the cardinality and $\Im(x)$ and $\Re (x)$ denote, respectively, the imaginary and real parts of $x$.
Clearly $F^{A_n}$ is a random distribution function. 
If, as $n \to \infty$,  it converges (almost surely) to a non-random distribution function $F_{\infty}$ weakly, then this limit is said to be the almost sure {\it limiting spectral distribution} (LSD) of $A_n$. The {\it expected} ESD function $\E[F^{A_n}(x,y)]$ is a non-random  distribution function. Its limit is also called the LSD and coincides with the earlier limit if both exist.

Let $A_m$ be the $m\times m$ left-uppermost sub-matrix of an $n\times n$  Haar distributed unitary matrix $U_n$. It is known that (see \cite[Theorem 5]{jiang}) that the ESD of the properly scaled eigenvalues of $A_m$, for $m=o(\sqrt n),$ converges to the circular law in probability.

In this article we look at a different regime of $m$-values. 
Suppose that $\frac{n}{m}\to \alpha
$ as $m\to \infty$, where $1<\alpha<\infty$. Our main result provides the LSD of $A_m$  and more generally the LSD of the product of finitely many such independent matrices. 
\begin{theorem}\label{thm:1}
Let $n_1,\ldots , n_{k+1}$ be $k+1$ positive integers such that $n_1=n_{k+1}=\min\{n_1,\ldots, n_{k+1}\}$. Suppose that 
$\frac{n}{n_i}\to \alpha_i$, where $1<\alpha_i<\infty$, for $i=1,\ldots, k$ as $n\to \infty$. Let $U_n^{(1)},\ldots, U_n^{(k)}$ be $k$ independent $n\times n$ Haar unitary matrices, and $A_1,\ldots, A_k$ be the $k$ left-uppermost sub-matrices of these matrices of dimensions $n_1\times n_2, \ldots, n_k\times n_{k+1}$, respectively. Then the LSD of $A_1\cdots A_k$ is $\mu_k$ almost surely, where $\mu_k$ is rotationally invariant and the distribution of its radial part is given by
\begin{align*}
\mu_k(\{z\;:\; |z|\le t\})=1+S^{<-1>}(t^{-2}),\;\mbox{for $t\le \frac{1}{\sqrt{\alpha_1\cdots \alpha_k}}$},
\end{align*}
where $ S(z)=\prod\limits_{i=1}^k\frac{\alpha_i(\alpha_1+z)}{\alpha_1+\alpha_iz}$ and  $f^{<-1>}$ denotes the  inverse of $f$ under the composition mapping. 

In particular, if $\alpha_1=\cdots= \alpha_k=\alpha$  then the LSD  of $A_1\cdots A_k$ is $\mu_k$ almost surely, and is given by
$$
d\mu_k(z)=\frac{(\alpha-1)}{k\pi}\frac{r^{\frac{2}{k}-1}}{(1-r^{\frac{2}{k}})^2}drd\theta,
$$
where $z=re^{i\theta}$ for $0\le r\le (\frac{1}{\alpha})^{\frac{k}{2}}$ and $0\le \theta\le 2\pi$. 
\end{theorem}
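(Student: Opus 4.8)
The plan is to identify the limiting $*$-distribution of $A_1\cdots A_k$ inside a tracial von Neumann algebra, recognise it there as an $R$-diagonal element, compute the $S$-transform of its square, read off the Brown measure from the Haagerup--Larsen description, and finally upgrade the equality of $*$-distributions to a statement about the ESD. Concretely, realise everything inside $M_n(\C)$ with the normalised trace, let $Q_i$ be the coordinate projection onto the first $n_i$ entries so that (after the obvious identifications) $A_i=Q_i U_n^{(i)}Q_{i+1}$, $\tfrac1n\tr Q_i\to\lambda_i:=1/\alpha_i$, and $Q_1=Q_{k+1}$ because $n_1=n_{k+1}$. By Voiculescu's asymptotic freeness theorem, $U_n^{(1)},\dots,U_n^{(k)}$ converge in $*$-distribution to $*$-free Haar unitaries $u_1,\dots,u_k$, jointly $*$-free from the commuting projections $q_1,\dots,q_{k+1}$ with $\tau(q_i)=\lambda_i$, in a tracial $W^*$-probability space $(\mathcal M,\tau)$; hence $T:=A_1\cdots A_k$ converges in $*$-distribution to $t:=q_1u_1q_2u_2\cdots q_ku_kq_{k+1}$, a genuinely square element of the corner $(q_1\mathcal M q_1,\tau_1)$, $\tau_1:=\tau(q_1\cdot q_1)/\lambda_1$. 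As a square product of (rectangularly) free truncated Haar unitaries whose outer dimensions match, $t$ is $R$-diagonal in $q_1\mathcal M q_1$, so its Brown measure is rotationally invariant.

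The computational core is the $S$-transform of $tt^*$ (equivalently of $t^*t$, since $t$ is square), and I would get it by peeling off one factor at a time. For $1\le j\le k+1$ let $N_j$ be the limiting distribution of $(A_j\cdots A_k)^*(A_j\cdots A_k)$ --- a genuine $n_1\times n_1$ object because $n_{k+1}=n_1$ is the smallest dimension --- with $N_{k+1}$ the identity, and let $P_{j+1}$ be the limit of $A_{j+1}\cdots A_k$. Then $N_j=P_{j+1}^*\,W_j\,P_{j+1}$ with $W_j:=u_j^*q_ju_j$ a projection of trace $\lambda_j$, whence by traciality $\tau_1(N_j^n)=\tfrac1{\lambda_1}\tau\big((W_jM_{j+1})^n\big)$ with $M_{j+1}:=P_{j+1}P_{j+1}^*$. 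The key point is that $W_j$ is \emph{free} from $M_{j+1}$: $u_j$ is a Haar unitary $*$-free from the algebra generated by $q_j$ and by $u_{j+1},\dots,u_k$ and all the $q$'s, and conjugating $q_j$ by such a $u_j$ produces a projection free from that whole algebra, which contains $M_{j+1}$. Combining multiplicativity of the $S$-transform under free multiplication (applied to the Bernoulli law of $W_j$ and the law of $M_{j+1}$, the latter carrying an atom of mass $1-\lambda_1$ at $0$ since $\operatorname{rank} M_{j+1}\le n_1$) with the elementary identity relating the $S$-transform of $(1-\lambda)\delta_0+\lambda\tilde\nu$ to that of $\tilde\nu$, one obtains the recursion $S_{N_j}(u)=\dfrac{\alpha_j(\alpha_1+u)}{\alpha_1+\alpha_j u}\,S_{N_{j+1}}(u)$. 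Telescoping from $S_{N_{k+1}}\equiv1$, and using that $t^*t$ has law $N_1$, gives $S_{tt^*}=\prod_{i=1}^k\frac{\alpha_i(\alpha_1+u)}{\alpha_1+\alpha_i u}=S$; the distinguished role of $\alpha_1$ is exactly that every $N_j$ is normalised in the $\lambda_1$-corner. The same bookkeeping gives $\tau_1(tt^*)=\prod_i\lambda_i=1/(\alpha_1\cdots\alpha_k)$.

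By the Haagerup--Larsen theorem for $R$-diagonal elements, $\mu_t$ is rotationally invariant, supported on the annulus with outer radius $\|t\|_{2,\tau_1}=(\alpha_1\cdots\alpha_k)^{-1/2}$ and inner radius $\big(\int x^{-1}\,d\mu_{tt^*}(x)\big)^{-1/2}$, and on the interior of its support $\mu_t(\{|z|\le\rho\})=1+S^{<-1>}(\rho^{-2})$. Since each $\alpha_i>1$, $t$ is injective but not invertible, and $\int x^{-1}\,d\mu_{tt^*}=S(-1^+)=\infty$ (the factor with index $1$ in $S$ blows up at $u=-1$), so the inner radius vanishes and the support is the full disc, identifying $\mu_t$ with the measure $\mu_k$ of the statement. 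What remains --- and what I expect to be the main obstacle --- is to show the ESD of $A_1\cdots A_k$ converges almost surely to $\mu_k$, and not merely that its limiting $*$-moments agree with those of $t$. Via Girko's Hermitisation this reduces to (a) convergence, for a.e.\ $z$, of the empirical singular-value distribution of $A_1\cdots A_k-zI$, again a free-probability (Stieltjes-transform) computation, together with (b) uniform integrability of $\log(\cdot)$ at $0$ against these measures, for which one needs a polynomial-in-$n$ lower bound on the smallest singular value of $A_1\cdots A_k-zI$ and control on the number of its small singular values, each holding with high probability; establishing these tail bounds for products of truncated Haar unitaries is the delicate step. Granting them, the logarithmic potentials $\tfrac1{n_1}\sum_j\log s_j(A_1\cdots A_k-zI)$ converge and $F^{A_1\cdots A_k}\to\mu_k$ weakly, almost surely.

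Finally, the equal-parameter case is a direct specialisation. With $\alpha_1=\cdots=\alpha_k=\alpha$ every factor of $S$ collapses to $\tfrac{\alpha+z}{1+z}$, so $S(z)=\big(\tfrac{\alpha+z}{1+z}\big)^k$; solving $\rho^{-2}=S(s-1)$ for $s=\mu_k(\{|w|\le\rho\})$ gives $\rho^{-2/k}=1+\tfrac{\alpha-1}{s}$, hence $s=\tfrac{(\alpha-1)\rho^{2/k}}{1-\rho^{2/k}}$ for $0\le\rho\le\alpha^{-k/2}$ (so $s=1$ at the edge $\rho=\alpha^{-k/2}$). Differentiating in $\rho$ and spreading the mass uniformly over the angle $\theta$ then yields $d\mu_k(z)=\tfrac{\alpha-1}{k\pi}\tfrac{r^{2/k-1}}{(1-r^{2/k})^2}\,dr\,d\theta$ for $z=re^{i\theta}$, $0\le r\le\alpha^{-k/2}$, $0\le\theta\le2\pi$, as claimed.
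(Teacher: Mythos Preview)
Your free-probability computation is correct and tracks the paper's argument closely: both embed $A_1\cdots A_k$ in $M_n(\C)$ via the projections $Q_i=\widetilde I_{n_i}$, pass to the $*$-limit $t=q_1u_1q_2\cdots q_ku_kq_1$ using asymptotic freeness of Haar unitaries from constant matrices, and compute $S_{tt^*}$ by peeling off factors through repeated use of ``$u$ Haar unitary free from $\mathcal B$ $\Rightarrow$ $u\mathcal Bu^*$ free from $\mathcal B$'' together with multiplicativity of the $S$-transform. Your recursion $S_{N_j}(u)=\tfrac{\alpha_j(\alpha_1+u)}{\alpha_1+\alpha_j u}\,S_{N_{j+1}}(u)$ is exactly the paper's iterated computation, and the Haagerup--Larsen identification of $\mu_t$ is the same.

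Where you diverge is the final step. You flag ESD convergence as the ``main obstacle'' and propose a full Hermitisation argument with least-singular-value tail bounds for $A_1\cdots A_k-zI$, which you do not carry out. The paper avoids this entirely by first proving a simple deterministic fact you did not exploit: $A_1\cdots A_k$ is \emph{bi-unitary invariant}. Indeed, for any unitaries $V_{n_1},W_{n_1}$ one may insert and cancel intermediate unitaries,
\[
V_{n_1}A_1\cdots A_kW_{n_1}=V_{n_1}A_1W_{n_2}\cdot W_{n_2}^*A_2W_{n_3}\cdots W_{n_k}^*A_kW_{n_1}\stackrel{d}{=}A_1\cdots A_k,
\]
since each $A_i$ is a corner of a Haar unitary. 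This observation does double duty. First, it gives $R$-diagonality of the limit directly (limits of bi-unitary invariant matrices are $R$-diagonal), so you need not argue $R$-diagonality through rectangular products. Second, and more importantly, it lets the paper invoke the single-ring theorem of Guionnet--Krishnapur--Zeitouni, which asserts precisely that for bi-unitary invariant ensembles the LSD coincides with the Brown measure of the $*$-limit. That citation replaces your entire Hermitisation programme in one line.

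So the gap in your proposal is not a wrong idea but an unnecessary detour: you set yourself the hard task of proving quantitative smallest-singular-value bounds, whereas recognising bi-unitary invariance hands you the ESD$\to$Brown-measure conclusion for free via the single-ring theorem. The equal-parameter specialisation at the end is fine and matches the paper verbatim.
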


Since the regimes  in \cite[Theorem 5]{jiang} and  Theorem \ref{thm:1} are completely different, the methods of  
proofs are also so.  
We have the following corollary. 

\begin{corollary}\label{cor:1}
Let $m$ and $n$ be two positive integers such that $\frac{n}{m}\to \alpha$ as $n\to \infty$, where $1<\alpha<\infty$. Let $A_m^{(1)},\ldots, A_m^{(k)}$ be $m\times m$ left-uppermost  sub-matrices of $k$ independent $n\times n$ Haar unitary matrices $U_n^{(1)},\ldots, U_n^{(k)}$ respectively. Then the limiting distribution of the square of the radial part of the eigenvalues of $A_m^{(1)}\cdots A_m^{(k)}$ is almost surely same as the distribution of
 $$
 \l(\frac{U}{\alpha-1+U}\r)^k,
 $$
 where $U$ is uniformly distributed random variable on interval $[0,1]$. 
In particular, when $\alpha=1$, the  LSD of $A_m^{(1)}\cdots A_m^{(k)}$ is the uniform distribution on the unit circle. 
\end{corollary}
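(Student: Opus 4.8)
The plan is to read off the corollary from Theorem~\ref{thm:1} with $\alpha_1=\cdots=\alpha_k=\alpha$, via a single elementary change of variables, treating $\alpha=1$ separately. For $1<\alpha<\infty$, I would start from the explicit density
\[
d\mu_k(z)=\frac{\alpha-1}{k\pi}\,\frac{r^{\frac{2}{k}-1}}{\bigl(1-r^{\frac{2}{k}}\bigr)^{2}}\,dr\,d\theta,\qquad 0\le r\le\alpha^{-k/2},\quad 0\le\theta\le 2\pi ,
\]
integrate out the angular variable, and substitute $w=r^{2/k}$, which collapses the radial integral to $\int\frac{dw}{(1-w)^{2}}$; carrying this out I expect to find that the radial part $R$ of a $\mu_k$-distributed point satisfies
\[
\mathbb P\bigl(R^{2}\le s\bigr)=\frac{(\alpha-1)\,s^{1/k}}{1-s^{1/k}},\qquad 0\le s\le\alpha^{-k}.
\]

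Next I would compare this with the law of $\bigl(U/(\alpha-1+U)\bigr)^{k}$. Setting $V=U/(\alpha-1+U)$ with $U$ uniform on $[0,1]$, the map $u\mapsto u/(\alpha-1+u)$ is increasing on $[0,1]$ with image $[0,1/\alpha]$, and $U/(\alpha-1+U)\le v$ is equivalent to $U\le v(\alpha-1)/(1-v)$, so $\mathbb P\bigl(V^{k}\le s\bigr)=\mathbb P\bigl(V\le s^{1/k}\bigr)=(\alpha-1)s^{1/k}/(1-s^{1/k})$ on $[0,\alpha^{-k}]$. This is precisely the distribution function found above, so $R^{2}\eqd\bigl(U/(\alpha-1+U)\bigr)^{k}$; since Theorem~\ref{thm:1} gives almost sure weak convergence of the empirical spectral distribution of $A_m^{(1)}\cdots A_m^{(k)}$ to $\mu_k$, the same identity passes to the almost sure limit of the empirical law of the squared moduli of the eigenvalues, which is the first assertion.

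Finally, for $\alpha=1$ the hypotheses of Theorem~\ref{thm:1} fail and the density above degenerates (the prefactor $\alpha-1$ vanishes while the support $r\le\alpha^{-k/2}$ fills the closed unit disc), so all limiting mass is formally pushed to $\{|z|=1\}$, matching $\bigl(U/(\alpha-1+U)\bigr)^{k}\to 1$ as $\alpha\downarrow 1$. To prove this rigorously I would pass to the $n\times n$ matrix $\widetilde\Pi=P_mU_n^{(1)}P_m\cdots P_mU_n^{(k)}P_m$, whose spectrum is that of $A_m^{(1)}\cdots A_m^{(k)}$ together with $n-m$ extra zeros, compare it to the Haar unitary $U=U_n^{(1)}\cdots U_n^{(k)}$ via a telescoping bound giving $\|\widetilde\Pi-U\|_{\mathrm{HS}}=O(\sqrt{n-m})=o(\sqrt{n})$, and then use that $U$, being unitary, is perfectly conditioned off the unit circle: the factorization $\det(\widetilde\Pi-z)=\det(U-z)\det\bigl(I+(U-z)^{-1}(\widetilde\Pi-U)\bigr)$ yields $\frac{1}{n}\log|\det(\widetilde\Pi-z)|-\frac{1}{n}\log|\det(U-z)|\to 0$ for every $|z|\ne 1$, so by Girko's Hermitization the ESD of $\widetilde\Pi$ --- hence of $A_m^{(1)}\cdots A_m^{(k)}$ --- has the same almost sure limit as that of a Haar unitary, namely the uniform law on the unit circle. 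This $\alpha=1$ case is the only delicate point: it is not covered by Theorem~\ref{thm:1}, and a mere rank count on $\widetilde\Pi-U$ does not suffice, since low-rank perturbations can move the eigenvalues of a non-normal matrix drastically --- one really must combine the Hilbert--Schmidt smallness of the perturbation with the conditioning of $U$ away from the circle.
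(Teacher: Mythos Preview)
For $1<\alpha<\infty$ your argument is essentially the paper's: both read the radial CDF off Theorem~\ref{thm:1} and match it to the law of $(U/(\alpha-1+U))^k$ via the same monotone change of variables. The only cosmetic difference is that the paper quotes the formula $\mathbb P(R_k\le t)=(\alpha-1)t^{2/k}/(1-t^{2/k})$ directly (it was already obtained in the proof of Theorem~\ref{thm:1}), while you re-derive it by integrating the density.

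For $\alpha=1$ you actually go further than the paper, which records the assertion but gives no separate argument (Theorem~\ref{thm:1} assumes $\alpha>1$, so the ``in particular'' is, in the paper, a formal limiting remark). Your Hermitization sketch is a reasonable route, but as written it has a gap: the Hilbert--Schmidt smallness of $B=(U-z)^{-1}(\widetilde\Pi-U)$ gives the \emph{upper} bound $\tfrac{1}{n}\log|\det(I+B)|\le\tfrac{1}{n}\sum_i\sigma_i(B)\le n^{-1/2}\|B\|_{\mathrm{HS}}\to 0$ via Weyl's inequality, but it does not control the \emph{lower} bound---there is no a priori obstruction to $I+B$ being nearly singular, which is precisely the usual hard step in Girko-type arguments. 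To close this you would need either a least-singular-value bound for $\widetilde\Pi-zI$, or to exploit that the perturbation has rank $O(n-m)=o(n)$, so that $\det(I+B)$ reduces (via $\det(I+AB)=\det(I+BA)$) to an $o(n)\times o(n)$ determinant whose contribution to $\tfrac1n\log|\cdot|$ can be controlled from both sides. Either way, this is genuinely more work than the paper itself supplies for the $\alpha=1$ case.
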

The LSD for the expected ESD of the squares of the radial part of the eigenvalues of $A_m^{(1)}\cdots A_m^{(k)}$ has been established in \cite[Theorem 22]{aihp}. They use the joint distribution of the eigenvalues of $A_m^{(1)}\cdots A_m^{(k)}$ for the proof. Instead we use free probability and Brown measure techniques. 

\section{preliminaries}
We first recall some basic definitions from the literature of free probability and Brown measure. A {\it non-commutative probability (NCP) space} is a pair $(\mathcal A,\vp)$ where $\mathcal A$ is a unital algebra over complex numbers and $\vp$ is a linear functional on $\mathcal A$ such that $\vp(\one_{\mathcal A})=1$. In addition, suppose that $\mathcal A$ is a $*$-algebra, i.e. that $\mathcal A$ is also endowed with an antilinear $*$-operation, $*:a\to a^*\in \mathcal A$, such that $(a^*)^*=a$ and $(ab)^*=b^*a^*$ for all $a,b\in \mathcal A$. Also if $\vp(aa^*)\ge 0$ for all $a\in \mathcal A$ then $(\mathcal A, \vp)$ is said to be a {\it $*$-probability space}.

Let $\mathcal A_n$ be the algebra of  $n\times n $ random matrices whose entries have all moments finite. It is equipped with the tracial state 
\begin{align*}
\vp_n(B_n)&:=\frac{1}{n}\Tr(B_n), \;\mbox{where $\Tr(B_n)=\sum_{i=1}^{n}b_{ii}$ when $B_n=(b_{ij})_{n\times n}\in \mathcal A_n$}.
\end{align*}
 We say that a sequence of random matrices $(A_n)$ from  $\mathcal A_n$ {\it converges in $*$-distribution almost surely } to some element $a\in \mathcal A$  
if for every choice of $\epsilon_1,\epsilon_2,\ldots, \epsilon_k\in \{1, *\}$ we have 
$$
\lim_{n\to \infty}\varphi_n(A_n^{\epsilon_1}\cdots A_n^{\epsilon_k})=\varphi(a^{\epsilon_1}\cdots a^{\epsilon_k}), \;\; \mbox{almost surely}.
$$
Then we write $A_n\stackrel{*\mbox{-dist}}{\longrightarrow}a$ almost surely as $n\to \infty$.

 Let $(a_i^n)_{i\in I}$ be a collection of random variables from $\mathcal A_n$ which converges in $*$-distribution to some $(a_i)_{i\in I}$ in  $(\mathcal A, \vp)$. Then $(a_i^{(n)})_{i\in I}$ are said to be asymptotically free if $(a_i)_{i\in I}$ are free.

Haar unitary elements and $R$-diagonal elements play a crucial role in the proof of our results.
 An element $u\in \mathcal A$ is said to be {\it Haar unitary} if it is a unitary (i.e. if $uu^*=u^*u=1$) and if $\vp(u^k)=0$, for all $k\in \Z\backslash \{0\}.$ For $k=0$ we have $\vp(u^0)=\vp(\one_{\mathcal A})=1$. Observe that this gives complete information about the $*$-distribution of $u$ because any $*$-moments of $u$ can be reduced to a moment of the form $\vp(u^k)$ for $k\in \Z$.

Let $\kappa_n(a_1,\ldots, a_n)$ denote the order $n$ free cumulant of $(a_1,\ldots, a_n)\in \mathcal A$ (see \cite[p. 175]{speicherbook}). An element $a\in \mathcal A$ is called {\it $R$-diagonal} if $\kappa_n(a_1,\ldots, a_n)=0$ for all $n\in \N$ whenever the arguments $a_1,\ldots, a_n\in \{a,a^*\}$ are not alternating in $a$ and $a^*$. As an example, $\kappa_4(a,a^*,a,a^*)$ is alternating. It is a convention that the cumulants with an odd number of arguments are not alternating, e.g. $\kappa_5(a^*,a,a^*,a,a^*)$ is not alternating. It is known that Haar unitary elements are $R$-diagonal.  For more details on $R$-diagonal elements we refer to  \cite[Lecture 15]{speicherbook}.

Next we introduce the Brown measure. Let  $\Delta (a) $ denote the Fuglede-Kadison  determinant (see \cite{fuglede}) of $a\in \mathcal A$. Then
\begin{align*}
\Delta (a):= \exp[\frac{1}{2}\vp(\log (aa^*)) ],
\end{align*}	
if $a$ is invertible. If $a$ is not invertible, then $\Delta (a):=\lim_{\e\to 0}\Delta_{\e} (a)$, where 
$$
\Delta_{\e} (a)=\exp[\frac{1}{2}\vp(\log (aa^*+\e^2)) ], \mbox{ for $\e>0$}.
$$
The Brown measure of $a\in \mathcal A$ is defined by (see \cite{brown}), for $\lambda\in \C$,
\begin{eqnarray*}
\mu_a &=&\frac{1}{2\pi}\left(\frac{\partial^2}{\partial(\Re \lambda)^2}+\frac{\partial^2}{\partial(\Im \lambda)^2}\r)\log\Delta (a-\lambda)\\
&=&\frac{2}{\pi}\frac{\partial}{\partial\lambda}\frac{\partial}{\partial\bar \lambda}\log\Delta (a-\lambda).
\end{eqnarray*}
One can show that in fact $\mu_a$ is a probability measure on $\C$. Consider any $n\times n$ matrix $A_n$. Then 
\begin{align*}
\Delta(A_n)=\sqrt[n]{|\det A_n|}, \mbox{ and } \mu_{A_n}=\frac{1}{n}\sum_{i=1}^n\d_{\lambda_i}
\end{align*}
 where $\lambda_1,\ldots, \lambda_n$ are the eigenvalues of $A_n$. So the Brown measure is the ESD of the matrix. See \cite{sniady} for details. 

However, if the ESD converges, there is no guarantee that the limiting Brown measure is the Brown measure of the limit element.
But often they do equal each other. For example the i.i.d. matrix converges in $*$-distribution to the circular element and its LSD is the uniform distribution on the unit disc. The latter is indeed the Brown measure of the circular element. See \cite{haagerup}. 
The LSD of  the elliptic matrix is the uniform probability measure on  an ellipse. At the same time, the Brown measure of an elliptic element is also the uniform probability measure on an ellipse (see \cite{belinschi}, \cite{larsen}). As a final example, in  \cite{manjusinglering} it has been shown that the LSD of a bi-unitary invariant random matrix as $n \to \infty$ is actually the Brown measure of the limit element. 

The Brown measure of any $R$-diagonal element can be described in terms of its $S$-transform (see Fact \ref{ft:brownR}).  Let $a\in \mathcal A$ such that $\vp(a)\neq 0$. Its moment generating series is defined as $$M_a(z)=\sum_{n=1}^{\infty}\vp(a^n)z^n.$$ 
Its {\it $S$-transform} is defined by 
$$S_a(z):=\frac{1+z}{z}M_a^{<-1>}(z),$$ 
where $f^{<-1>}$ denotes the  inverse of $f$ under the composition mapping (see \cite{speicherbook}, page 294). The moment generating series  and the $S$-transform are analytic functions on suitably chosen domains in the complex plane. One can show that $S_a$ is well defined in some neighbourhood of the origin.

\section{Proofs}\label{sec:proofs}

We will use the following fact whose proof can be found in \cite{haagerup}.

\begin{fact}\label{ft:brownR}
	Suppose $x$ is an $R$-diagonal element. Then its Brown measure $\mu_x$ is rotationally invariant and can be described by the probabilities  
	\begin{align*}
	\mu_x(\{\lambda\in \C \;:\; |\lambda|\le t\})=\left\{\begin{array}{lcr}
	0 & \mbox{for} & t\le \frac{1}{\sqrt{\vp(xx^*)^{-1}}}\\
	1+S_{xx^*}^{<-1>}(t^{-2}) & \mbox{for} & \frac{1}{\sqrt{\vp(xx^*)^{-1}}}\le t \le \sqrt{\vp(xx^*)}\\
	1 & \mbox{for } & t\ge \sqrt{\vp(xx^*)},
	\end{array} \right.
	\end{align*}
	where $f^{<-1>}$ denotes the  inverse of $f$ under the composition mapping.
\end{fact}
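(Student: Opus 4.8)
The plan is to follow the argument of \cite{haagerup}: exploit $R$-diagonality to collapse the planar problem to a radial one, compute the (regularized) Fuglede--Kadison determinant of $x-\lambda$ through the polar decomposition of $x$ together with free-probabilistic tools, and finally invert a subordination relation to recover the $S$-transform formula. First I would record that an $R$-diagonal $x$ and $e^{i\theta}x$ have the same $*$-distribution: every $*$-moment is a sum over alternating free cumulants, each of which contains equally many $x$ and $x^*$, so the phases $e^{\pm i\theta}$ cancel. Since $\Delta$ is multiplicative and $\Delta(e^{i\theta}\one)=1$,
\begin{align*}
\Delta(x-\lambda)=\Delta(e^{i\theta}x-\lambda)=\Delta\big(e^{i\theta}(x-e^{-i\theta}\lambda)\big)=\Delta(x-e^{-i\theta}\lambda),
\end{align*}
so $L(\lambda):=\log\Delta(x-\lambda)$ depends only on $t=|\lambda|$; in particular $\mu_x$ is rotationally invariant. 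For a radial $L$ one has $\nabla^2 L=L''+t^{-1}L'$, and Gauss' theorem over the disc of radius $t$ gives $\mu_x(\{|\lambda|\le t\})=\frac{1}{2\pi}\int_{|\lambda|\le t}\nabla^2 L\,dA=t\,L'(t)$, so it remains to compute $\frac{d}{dt}\log\Delta(x-t)$ for real $t>0$.

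For the determinant itself, write $x=uh$ with $u$ Haar unitary $*$-free from $h=(x^*x)^{1/2}\ge 0$ (the Nica--Speicher characterization of $R$-diagonality); a short traciality computation shows $a:=xx^*=uh^2u^*$ has the same distribution as $h^2$, so $S_a=S_{xx^*}$. Since $x-t$ fails to be invertible exactly on the support annulus, I would work with the regularization $\log\Delta_{\varepsilon}(x-t)=\tfrac12\,\vp\big[\log\big((x-t)(x-t)^*+\varepsilon^2\big)\big]$ and compute the spectral distribution of $(x-t)(x-t)^*+\varepsilon^2$ by free probability: passing to the Hermitization $Y=\left(\begin{smallmatrix}0&x\\x^*&0\end{smallmatrix}\right)$ and evaluating the $M_2(\C)$-valued Cauchy transform $(\mathrm{id}\otimes\vp)\big[(\eta-Y)^{-1}\big]$ at $\eta=\left(\begin{smallmatrix}i\varepsilon&t\\t&i\varepsilon\end{smallmatrix}\right)$, the freeness of the polar parts makes the subordination equation close. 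Differentiating in $t$ and letting $\varepsilon\to 0$, I expect the calculation to reduce to a single deformation parameter $\kappa>0$: writing $s=\mu_x(\{|\lambda|\le t\})$ and defining $\kappa$ by $\vp[\kappa(a+\kappa)^{-1}]=s$, the determinant computation should yield
\begin{align*}
t^2=\frac{(1-s)\,\kappa}{s}.
\end{align*}

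To pass to the $S$-transform, set $\psi_a(z)=\vp[za(1-za)^{-1}]$, so the defining equation $\vp[\kappa(a+\kappa)^{-1}]=s$ reads $\psi_a(-1/\kappa)=s-1$, whence $\psi_a^{<-1>}(s-1)=-1/\kappa$ and, from $S_a(z)=\frac{1+z}{z}\psi_a^{<-1>}(z)$,
\begin{align*}
S_a(s-1)=\frac{s}{(1-s)\kappa}=t^{-2}.
\end{align*}
Solving for $s$ gives $s=1+S_a^{<-1>}(t^{-2})$, the asserted formula. The endpoints follow from the standard limits $S_a(0)=1/\vp(a)$ (so $s=1$ at $t=\sqrt{\vp(xx^*)}$) and $S_a(-1)=\vp(a^{-1})$ (so $s=0$ at $t=\vp((xx^*)^{-1})^{-1/2}$); when $x$ is non-invertible $\vp(a^{-1})=\infty$ and the inner radius degenerates to $0$, consistent with the first line of the stated formula.

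The crux is the middle step: carrying out the regularized singular-value computation for $x-t$, i.e. solving the operator-valued subordination equation explicitly in terms of $S_a$, and then justifying the limit $\varepsilon\to 0$. This limit is exactly where the mass of $\mu_x$ on the annulus is produced, since the naive resolvent $\vp[(x-t)^{-1}]$ is holomorphic where $x-t$ is invertible and hence detects nothing inside the support. Secondary technical points, all routine once the main relation is in hand, are the rigor of the distributional Laplacian identity, the monotonicity of $\kappa\mapsto s$ and $\kappa\mapsto t$ needed to invert, and ruling out atoms so that $\mu_x$ is the genuine probability measure with the stated radial distribution function.
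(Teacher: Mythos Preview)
The paper does not supply its own proof of this Fact: it is quoted verbatim from \cite{haagerup} and used as a black box in the proof of Theorem~\ref{thm:1}. So there is nothing in the paper to compare your argument against.

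As a sketch of the Haagerup--Larsen proof your outline is sound. The rotational invariance via $x\sim e^{i\theta}x$, the reduction $\mu_x(\{|\lambda|\le t\})=t\,L'(t)$ for the radial function $L(t)=\log\Delta(x-t)$, the polar decomposition $x=uh$ with $u$ Haar unitary $*$-free from $h$, and the algebra linking the subordination parameter $\kappa$ to $S_{xx^*}$ (your check that $\psi_a(-1/\kappa)=s-1$ forces $S_a(s-1)=s/((1-s)\kappa)=t^{-2}$) are all correct, as are the endpoint identifications $S_a(0)=1/\vp(a)$ and $\lim_{z\to -1}S_a(z)=\vp(a^{-1})$. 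One methodological note: the original argument in \cite{haagerup} does not pass through the $M_2(\C)$-valued Hermitization you propose; it computes the law of $|x-\lambda|$ more directly by writing $(uh-\lambda)^*(uh-\lambda)$ and using the free additive/multiplicative convolution structure available from the freeness of $u$ and $h$, whence your parameter $\kappa$ emerges as a subordination point without an operator-valued resolvent. Your Hermitization route (closer to \cite{belinschi}) is a legitimate alternative and leads to the same key relation; you are right that in either approach the genuine analytic work is the justification of the $\varepsilon\to 0$ limit in the regularized determinant.
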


We first show that the $*$-limit of $A_1\cdots A_k$  is an $R$-diagonal element (see Lemma \ref{lem:rdiagonal}). Then we identify its $S$-transform  and invoke the above fact to finish the proof.

\begin{lemma}\label{lem:rdiagonal}
Let $A_1,\ldots, A_k$ be as defined in Theorem \ref{thm:1}. Then the limiting element in the sense of $*$-distribution of $A_1\cdots A_k$ is $R$-diagonal.
\end{lemma}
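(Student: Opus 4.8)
The plan is to exhibit $A_1\cdots A_k$ as a $*$-limit of a product of asymptotically free factors, each of which is itself $R$-diagonal, and then invoke the fact that a product of free $R$-diagonal elements is again $R$-diagonal (see \cite[Proposition 15.8]{speicherbook}). The key structural observation is that each truncation $A_i$, being the left-uppermost $n_i\times n_{i+1}$ block of a Haar unitary $U_n^{(i)}$, can be written as $A_i = P_i U_n^{(i)} Q_i$ where $P_i$ is the projection onto the first $n_i$ coordinates of $\C^n$ (viewed as a map $\C^n\to\C^{n_i}$) and $Q_i$ the inclusion of the first $n_{i+1}$ coordinates. Since we only care about the $*$-distribution of the product (that is, of traces of words in $A_1\cdots A_k$ and its adjoint), and since $n_1=n_{k+1}=\min$, we may pad everything back up to $\C^n$: set $B_i := \Pi_i U_n^{(i)} \Pi_{i+1}'$ where $\Pi_i$ is the $n\times n$ diagonal projection of rank $n_i$, so that the nonzero part of $B_1 B_2\cdots B_k$ carries the same singular/eigenvalue data (up to the harmless zero eigenvalues) as $A_1\cdots A_k$.

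First I would make precise the reduction to the $n\times n$ picture: show that $\varphi_{n_1}$ applied to a word in $A_1\cdots A_k$ equals $\frac{n}{n_1}$ times $\varphi_n$ applied to the corresponding word in $\Pi_1 U_n^{(1)}\Pi_2 U_n^{(2)}\cdots \Pi_k U_n^{(k)}\Pi_1$, using $n_1=n_{k+1}$ so that the cyclic word closes up, together with the fact that $\Pi_1$ acts as the identity on the relevant subspace. Second, I would invoke asymptotic freeness: the matrices $U_n^{(1)},\dots,U_n^{(k)}$ together with the deterministic projections $\Pi_1,\dots,\Pi_k$ are asymptotically free almost surely (this is the standard consequence of Voiculescu's theorem on asymptotic freeness of independent Haar unitaries from constant matrices, e.g. \cite[Theorem 23.14]{speicherbook}), with each $U_n^{(i)}$ converging to a Haar unitary $u_i$ and each $\Pi_i$ to a projection $p_i$ of trace $\alpha_i^{-1}$ in the limit $*$-probability space. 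Third, each factor $p_i u_i p_{i+1}'$ — more precisely the element $u_i$ compressed between two free projections — is $R$-diagonal: a Haar unitary is $R$-diagonal, and compression of an $R$-diagonal element by free projections (on either side) preserves $R$-diagonality (see \cite[Lecture 15]{speicherbook}); alternatively one checks directly that $p_i u_i q_i$ has the same $*$-distribution as a partial isometry times a positive element freely independent of a Haar unitary, hence is $R$-diagonal.

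The last step is to assemble: the limiting element $a_1\cdots a_k$, where $a_i$ is the $R$-diagonal compression corresponding to $A_i$, is a product of \emph{free} $R$-diagonal elements — freeness of the $a_i$ follows from the asymptotic freeness of the independent unitaries $u_i$ from each other and from all the projections — and a product of free $R$-diagonal elements is $R$-diagonal. I expect the main obstacle to be bookkeeping rather than anything deep: one must be careful that the "padding" to size $n$ does not distort the $*$-distribution in a way that affects $R$-diagonality (the extra zero block contributes only zero eigenvalues and the relevant traces scale by the constant factor $n/n_1$, which is harmless for checking that cumulants of non-alternating words in $a$, $a^*$ vanish), and one must correctly track which projection sits on the left versus the right of each $u_i$ so that consecutive factors $\dots p_{i+1}' \cdot p_{i+1}\dots$ compose correctly (this is where $n_i=n_{i+1}$ at the endpoints is used, and where the condition $n_1=n_{k+1}=\min$ guarantees no rank mismatch kills the product). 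Once $R$-diagonality is established, Fact~\ref{ft:brownR} applies and the remainder of Theorem~\ref{thm:1} reduces to computing the $S$-transform of $(a_1\cdots a_k)(a_1\cdots a_k)^*$, which is the content of the subsequent lemmas.
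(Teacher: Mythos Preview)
Your strategy has a genuine gap at the step where you claim the factors $a_i := p_i u_i p_{i+1}$ are free from one another. The $*$-algebra generated by $a_i$ contains $a_i^* a_i = p_{i+1} u_i^* p_i u_i p_{i+1}$ and, more importantly, shares the projection $p_{i+1}$ with the $*$-algebra generated by $a_{i+1}$; since the limiting projections $p_1,\dots,p_k$ all come from deterministic nested diagonal projections, they commute and are \emph{not} free from each other. Concretely, for $k=2$ one computes (using freeness of $u_1,u_2$ from $\{p_1,p_2\}$ and Fact~\ref{ft:freewithu}) that $\tilde\varphi(a_1 a_1^{*}\, a_2^{*} a_2)=\tilde\varphi(p_1)\tilde\varphi(p_2)^2$ while $\tilde\varphi(a_1 a_1^{*})\,\tilde\varphi(a_2^{*} a_2)=\tilde\varphi(p_1)^2\tilde\varphi(p_2)^2$; since $\tilde\varphi(p_1)=\alpha_1^{-1}\in(0,1)$ these differ, so $a_1$ and $a_2$ are not $*$-free and Proposition~15.8 of \cite{speicherbook} does not apply. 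The same obstruction blocks the side claim that each $p_i u_i p_{i+1}$ is $R$-diagonal ``because compression by free projections preserves $R$-diagonality'': the two projections $p_i,p_{i+1}$ are not free from each other, and the $*$-algebra generated by $u_i p_{i+1}$ already contains $p_{i+1}=(u_i p_{i+1})^{*}(u_i p_{i+1})$, from which $p_i$ is not free.

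The paper avoids this entirely by staying at the random-matrix level: it shows that each truncation $A_i$ is bi-unitarily invariant (pad $V_{n_i},W_{n_{i+1}}$ to $n\times n$ unitaries with identity blocks and use bi-invariance of Haar measure), then inserts and cancels unitaries $W_{n_i}W_{n_i}^{*}$ between consecutive independent factors to conclude that $A_1\cdots A_k$ itself is bi-unitarily invariant in law. $R$-diagonality of the $*$-limit then follows from the general fact \cite[Theorem~4.4.5]{hiaibook} that limits of bi-unitary invariant ensembles are $R$-diagonal. Your padding and asymptotic-freeness setup is exactly what is used later in Lemma~\ref{lem:stransform} to compute $S_{aa^*}$, so that part of your outline is on target; but for the $R$-diagonality itself you should replace the ``product of free $R$-diagonals'' argument by the bi-unitary invariance one.
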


\begin{proof}[Proof of Lemma \ref{lem:rdiagonal}]
Let $A_{p\times q}$ be a $p\times q$ left-uppermost sub-matrix of an $n\times n$ Haar unitary matrix $U_n$. Let $V_p$ and $W_q$ be two non-random unitary matrices of dimensions $p\times p$ and $q\times q$, respectively. 
Let \begin{align*}
 	\widehat{V}_p&:=\l[\begin{array}{cc}
V_p & 0\\ 0 & I_{n-p}
\end{array}\r]_{n\times n}.
\end{align*}
and define $\widehat{W}_q$ likewise. Note that these matrices are $n\times n$ unitary matrices. 
Moreover, we have 
$$
\widehat{V}_p U_n \widehat{W}_q = \l[\begin{array}{cc}
V_pA_{p\times q} W_q & V_p B \\ CW_q & D
\end{array} \r]_{n\times n}, \mbox{ where }
U_n=\l[\begin{array}{cc}
A_{p\times q} & B\\ C & D
\end{array} \r]_{n\times n}.
$$
Since $U_n$ is bi-unitary invariant, we know that $\widehat{V}_p U_n \widehat{W}_q$ has the same distribution as $U_n$. Therefore $V_pA_{p\times q} W_q$ has the same distribution as $A_{p\times q}$.

Let $V_{n_1}$ be an $n_1\times n_1$ non-random unitary matrix, and $W_{n_i}$ be $n_i\times n_i$ non-random unitary matrices for $i=1,\ldots, k$.  Since $A_1,\ldots, A_k$ are independent, 
 $$
 V_{n_1}A_1\cdots A_kW_{n_1}=V_{n_1}A_1W_{n_2}W_{n_2}^*A_{2}\cdots W_{n_k}^*A_kW_{n_1}\stackrel{d}{=}A_1\cdots A_k.
 $$
 Therefore $A_1\cdots A_k$ is bi-unitary invariant. Hence the result follows from the fact that the limit of bi-unitary invariant matrices is $R$-diagonal (see \cite[Theorem 4.4.5]{hiaibook}).
\end{proof}

In the next lemma we calculate the $S$-transform of the limiting element of $A_1\cdots A_k$.

\begin{lemma}\label{lem:stransform}
	Let $A_1,\ldots, A_k$ be as in Theorem \ref{thm:1}, and $A_1\cdots A_k\stackrel{*\mbox{-\tiny{dist}}}{\longrightarrow} {a} $ almost surely, for some $a\in (\mathcal A,\vp)$,  with respect to $\vp_{n_1}$ as $n\to \infty$. Then the $S$-transform of $aa^*$ is given by
	$$
	S_{aa^*}(\lambda)=\prod_{i=1}^k\frac{\alpha_i(\lambda+\alpha_1)}{\alpha_1+\alpha_i\lambda},\;\mbox{ for all $\lambda\in \D$},
	$$
where $\D$ denotes the unit disk in the complex plane. If $\alpha_1=\cdots=\alpha_k=\alpha$ then 
	$$
	S_{aa^*}(\lambda)=\l(\frac{\lambda+\alpha}{1+\lambda}\r)^k,\;\mbox{ for all $\lambda\in \D$}.
	$$
\end{lemma}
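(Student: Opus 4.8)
The plan is to compute $S_{aa^*}$ by first identifying the $*$-distribution limit of each single truncated block $A_i$, then using multiplicativity of the $S$-transform for free elements. Here is the structure. First I would observe that each $A_i$, being a $p\times q$ truncation of an independent Haar unitary, converges in $*$-distribution to an $R$-diagonal element $a_i$, and that $a_1,\dots,a_k$ are free (this follows from asymptotic freeness of independent bi-unitarily invariant matrices, together with Lemma \ref{lem:rdiagonal}; one must be slightly careful here because the ambient dimensions differ, so I would pass to the common $n\times n$ frame, embedding each $A_i$ as a corner of $U_n^{(i)}$ times the appropriate coordinate projections, and take traces with respect to $\vp_{n_1}$ after suitable rescaling). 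The key point is then that $aa^* = a_1\cdots a_k a_k^*\cdots a_1^*$, and for $R$-diagonal (or more generally $*$-free) factors one has the well-known identity
\[
S_{aa^*}(\lambda)=\prod_{i=1}^k S_{a_i a_i^*}(\lambda),
\]
which follows from repeatedly applying the fact that for free $x,y$ with $x$ $R$-diagonal (or for $xyx^*$ type products), $S_{xyy^*x^*}=S_{xx^*}\cdot S_{yy^*}$; concretely, $a_i a_i^*$ and $(a_{i+1}\cdots a_k)(a_{i+1}\cdots a_k)^*$ are free and one uses Haagerup--Larsen's result that $S$ is multiplicative along such chains of $R$-diagonal elements.

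The heart of the computation is therefore the single-matrix case: determining $S_{a_i a_i^*}(\lambda)$ where $a_i$ is the $*$-limit of the $n_i\times n_{i+1}$ truncation. For this I would use the fact that $A_i A_i^*$ is (a corner of) $U_n^{(i)}$ projected and is the classical truncated-unitary/Jacobi-ensemble matrix: its LSD is a known Marchenko--Pastur-type (free Poisson / Kesten--McKay) law whose support and density are explicit in terms of the aspect ratios. More precisely, $A_i A_i^*$ has the same $*$-limit as $P U_n^{(i)} Q (U_n^{(i)})^* P$ for coordinate projections $P,Q$ of ranks $\sim n/\alpha$, and by free compression this limit is the free multiplicative convolution of two Bernoulli (projection) distributions with atoms at $0$ and $1$ of appropriate weights. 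The $S$-transform of a Bernoulli distribution $\beta\delta_1 + (1-\beta)\delta_0$ is $\frac{1+\lambda}{\beta+\lambda}$ (rescaled by the trace); multiplying the two such $S$-transforms — one for $P$, one for the compressed $Q$ — and tracking the normalization so that $\vp(a_ia_i^*)$ comes out to the right value, yields exactly $\frac{\alpha_i(\lambda+\alpha_1)}{\alpha_1+\alpha_i\lambda}$. I would verify the normalization by checking $S_{a_ia_i^*}(0)=1/\vp(a_ia_i^*)$ against the direct trace computation $\vp(a_ia_i^*)=\lim \frac{1}{n_1}\Tr(A_iA_i^*)$, which one gets from the block structure of $U_n^{(i)}$; the appearance of $\alpha_1$ (rather than $\alpha_i$ alone) in the numerator factor is precisely the bookkeeping artifact of normalizing all traces by $n_1$ rather than by $n_i$.

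Once $S_{a_i a_i^*}$ is in hand, the first displayed formula follows by the multiplicativity above, and the special case $\alpha_1=\cdots=\alpha_k=\alpha$ is then immediate since every factor collapses to $\frac{\alpha(\lambda+\alpha)}{\alpha+\alpha\lambda}=\frac{\lambda+\alpha}{1+\lambda}$, giving the $k$-th power. Finally I would note that all these $S$-transforms are genuinely analytic on the unit disk $\D$ — the poles sit at $\lambda=-\alpha_1/\alpha_i$, which has modulus $>1$ since $\alpha_i>1\ge \alpha_1/\alpha_i$ is not quite the right inequality, so this is a point to check: one needs $\alpha_1/\alpha_i \le 1$, i.e. $\alpha_1\le\alpha_i$, which holds because $n_1=\min n_i$ forces $\alpha_1=\max\alpha_i$... hence in fact $\alpha_1/\alpha_i\ge 1$ and the pole is \emph{inside} the disk; so instead one must argue the product has a removable structure or restrict to a smaller neighborhood of $0$ and then analytically continue, which is the main technical obstacle I anticipate. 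Resolving this — showing the claimed rational function really is the analytic $S$-transform on all of $\D$ (or correctly restricting the domain) — is where I expect to spend the most care; the algebra of multiplying Bernoulli $S$-transforms and matching moments is routine by comparison.
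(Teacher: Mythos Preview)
Your outline has a genuine gap in the rectangular case. You want to factor $S_{aa^*}=\prod_i S_{a_ia_i^*}$ by arguing that the limits $a_1,\dots,a_k$ of the individual blocks are $*$-free. But the $A_i$ are $n_i\times n_{i+1}$, so once you ``pass to the common $n\times n$ frame'' the only natural way to realise them is $\tilde a_i=b_i u_i b_{i+1}$, where $b_j$ is the limit of the coordinate projection $\widetilde I_{n_j}$ and $u_i$ is Haar unitary free from $\{b_1,\dots,b_k\}$. Consecutive $\tilde a_i,\tilde a_{i+1}$ then share the projection $b_{i+1}$, and this destroys freeness: for $k=2$ one computes (using $b_1\le b_2$ since $n_1=\min n_j$)
\[
\tilde\vp(\tilde a_1\tilde a_1^{*}\,\tilde a_2\tilde a_2^{*})
=\tilde\vp(b_1)\tilde\vp(b_2)\tilde\vp(b_1)=\frac{1}{\alpha_1^{2}\alpha_2},
\qquad
\tilde\vp(\tilde a_1\tilde a_1^{*})\,\tilde\vp(\tilde a_2\tilde a_2^{*})=\frac{1}{\alpha_1^{2}\alpha_2^{2}},
\]
so $\tilde a_1\tilde a_1^{*}$ and $\tilde a_2\tilde a_2^{*}$ are not free in $(\widetilde{\mathcal A},\tilde\vp)$ unless $\alpha_2=1$. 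Hence the step ``$a_ia_i^{*}$ and $(a_{i+1}\cdots a_k)(a_{i+1}\cdots a_k)^{*}$ are free'' fails in general, and the Haagerup--Larsen multiplicativity you invoke does not apply directly. (Your approach \emph{does} go through when $\alpha_1=\cdots=\alpha_k$: then all $A_i$ are square $m\times m$, genuinely live in $(\mathcal A_m,\vp_m)$, are independent bi-unitarily invariant, hence asymptotically $*$-free, and the product formula holds.)

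The paper circumvents this by peeling at a finer granularity. It embeds the product as $\widetilde B_{n_1}=\widetilde I_{n_1}U_n^{(1)}\widetilde I_{n_2}\cdots \widetilde I_{n_k}U_n^{(k)}\widetilde I_{n_1}$ and first relates the two states by the elementary scaling $M_{aa^*}(\lambda)=\alpha_1 M_{\tilde a\tilde a^*}(\lambda)$, giving
$S_{aa^*}(\lambda)=\frac{1+\lambda}{\alpha_1+\lambda}\,S_{\tilde a\tilde a^*}(\lambda/\alpha_1)$.
Then, instead of separating whole factors $\tilde a_i$, it repeatedly uses only that each $u_i$ is Haar unitary free from the rest (so $\mathcal B$ and $u_i\mathcal Bu_i^*$ are free) to strip off one $b_j$ or one $u_i(\cdot)u_i^*$ at a time, obtaining
$S_{\tilde a\tilde a^*}(\lambda)=S_{b_1}(\lambda)^{2}\prod_{i=2}^{k}S_{b_i}(\lambda)$
with $S_{b_i}(\lambda)=\alpha_i(1+\lambda)/(1+\alpha_i\lambda)$, and then substitutes back. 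This uses exactly the freeness that is actually available and avoids any claim about the $\tilde a_i$ themselves. Your ``bookkeeping'' intuition for the stray $\alpha_1$ is correct, but it is implemented through this scaling relation rather than through a trace on rectangular blocks. Finally, your worry about the pole location is legitimate but peripheral: the identity is first derived as a germ at $0$ via Fact~\ref{ft:stransform}, and the displayed rational function is then its analytic continuation; the phrase ``for all $\lambda\in\D$'' should be read in that sense.
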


The following facts will be used in the proofs of Lemma \ref{lem:stransform} and Theorem \ref{thm:1}.  

\begin{fact}(\cite[Theorem 23.13]{speicherbook},\cite[Theorem 4.3.1]{hiaibook})\label{ft:free}
	Let, for each $n\in \N$, $U_n^{(1)},\ldots, U_n^{(p)}$ be $p$ independent $n\times n$ Haar unitary random matrices. Let $D_n^{(1)},\ldots, D_n^{(q)}$ be $q$  constant matrices which converge in $*$-distribution (with respect to $\vp_n$) for $n\to \infty$, i.e.,
	$$
	D_n^{(1)},\ldots, D_n^{(q)}\stackrel{*\mbox{\tiny -dist}}{\longrightarrow} d_1,\ldots, d_q
	$$
	for some $d_1,\ldots, d_q\in (\mathcal A,\vp)$. Then, almost surely,
	\begin{align*}
	U_n^{(1)},\ldots,U_n^{(p)}, D_n^{(1)},\ldots, D_n^{(q)}\stackrel{*\mbox{\tiny -dist}}{\longrightarrow} u_1,\ldots,u_p,d_1,\ldots,d_q,
	\end{align*}
	where $u_1,\ldots,u_p,\{d_1,\ldots,d_q\}$ are free and where each $u_i$ is a Haar unitary element.
\end{fact}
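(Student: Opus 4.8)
The plan is to prove this by the Weingarten calculus together with a variance estimate, which is the random-matrix route to Voiculescu's asymptotic freeness theorem (in the almost sure form of Hiai--Petz and Collins). First I would reduce the almost sure statement to two claims about normalized traces of monomials: (i) for every $*$-monomial $P$ in $U_n^{(1)},\ldots,U_n^{(p)},D_n^{(1)},\ldots,D_n^{(q)}$ the mean $\E\,\vp_n(P)$ converges to a limit $\tau(P)$, and (ii) a variance bound $\Var(\vp_n(P))=O(n^{-2})$. Given (i) and (ii), the Borel--Cantelli lemma upgrades convergence in mean to almost sure convergence for each fixed monomial; since there are only countably many monomials the exceptional null sets combine into one, yielding almost sure $*$-convergence. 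It then remains to identify the limiting functional $\tau$ as the one under which $\langle u_1\rangle,\ldots,\langle u_p\rangle,\langle d_1,\ldots,d_q\rangle$ are free with each $u_i$ Haar unitary.

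For (i) I would write $\vp_n(P)=\frac1n\Tr(P)$ as a sum over matrix indices of products of entries, and integrate out the independent Haar unitaries one at a time. For a single Haar unitary the relevant integral is given by the Weingarten formula
$$\E\bigl[U_{i_1 j_1}\cdots U_{i_r j_r}\,\overline{U_{k_1 l_1}}\cdots\overline{U_{k_r l_r}}\bigr]=\sum_{\sigma,\rho\in S_r}\prod_{s=1}^{r}\delta_{i_s\, k_{\sigma(s)}}\,\delta_{j_s\, l_{\rho(s)}}\,\mathrm{Wg}(n,\rho\sigma^{-1}),$$
the integral vanishing when the numbers of $U$ and $\overline{U}$ factors differ. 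Performing this integration for each independent unitary turns $\E\,\vp_n(P)$ into a finite sum indexed by tuples of permutations, in which the Kronecker deltas glue the summation indices into loops and the deterministic matrices enter through normalized traces of products of the $D_n^{(j)}$ read off along these loops, which converge by the hypothesis $D_n^{(j)}\to d_j$.

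The asymptotic analysis is a genus expansion. Using $\mathrm{Wg}(n,\pi)=n^{-(2r-\#(\pi))}\bigl(\mathrm{Moeb}(\pi)+O(n^{-2})\bigr)$, where $\#(\pi)$ is the number of cycles of $\pi$, I would count powers of $n$: each closed index loop contributes a factor $n$ while the Weingarten weights contribute negative powers, so the net exponent is maximized exactly by the planar (non-crossing) configurations, which survive with coefficient $1$ while every non-planar term is $O(n^{-2})$. The surviving limit is precisely the moment--cumulant expansion in which all mixed free cumulants across the algebras $\langle u_1\rangle,\ldots,\langle u_p\rangle,\langle d_1,\ldots,d_q\rangle$ vanish; equivalently, one checks directly that every alternating product of centered elements (with centering $u_i^k\mapsto u_i^k$ for $k\neq0$ and $D\mapsto D-\vp(D)\one$) has limiting trace zero. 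This is the defining relation of freeness, and it simultaneously forces each $u_i$ to be Haar unitary, since $\vp(u_i^k)=\lim_n\frac1n\Tr\bigl((U_n^{(i)})^k\bigr)=\delta_{k,0}$.

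For (ii) I would apply the Weingarten formula to $\E\,\vp_n(P)\,\overline{\vp_n(P)}$, which doubles the number of unitary factors, and show that the disconnected terms reproduce $|\E\,\vp_n(P)|^2$ while every connected correction carries an extra factor $n^{-2}$, giving the claimed bound. The main obstacle is the combinatorial bookkeeping in the two central steps: correctly identifying, for a general mixed monomial, which index loops are closed by the trace and by the Kronecker deltas, matching the leading power of $n$ to the non-crossing pairings, and verifying the cancellations that make both the limit equal to the free product functional and the variance $O(n^{-2})$.
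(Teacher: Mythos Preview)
The paper does not prove this statement at all: it is recorded as a \emph{Fact} with references to \cite[Theorem~23.13]{speicherbook} and \cite[Theorem~4.3.1]{hiaibook}, and is used as a black box in the proof of Lemma~\ref{lem:stransform}. So there is no ``paper's own proof'' to compare against.

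That said, your outline is a legitimate and by now standard route to Voiculescu's asymptotic freeness theorem in its almost sure form. The reduction to (i) convergence of $\E\,\vp_n(P)$ and (ii) a variance bound $\Var(\vp_n(P))=O(n^{-2})$, followed by Borel--Cantelli over the countable family of $*$-monomials, is exactly the right scaffold; the Weingarten formula together with the leading-order expansion $\mathrm{Wg}(n,\pi)=n^{-(2r-\#(\pi))}(\mathrm{Moeb}(\pi)+O(n^{-2}))$ and the genus-counting argument does produce the non-crossing (free) combinatorics in the limit. This is essentially Collins' approach, and it differs in flavor from the cited proofs: Nica--Speicher's Theorem~23.13 proceeds via second-order cumulants and their vanishing-of-mixed-cumulants criterion for asymptotic freeness, while Hiai--Petz's Theorem~4.3.1 goes through a concentration argument. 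Your route is arguably the most transparent for seeing why Haar unitaries are asymptotically free from deterministic matrices, at the cost of heavier combinatorics; the cited proofs are more algebraic and avoid explicit Weingarten bookkeeping.

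The one place where your sketch is thin is the identification step: showing that the surviving planar contributions coincide \emph{exactly} with the free-product functional on $\langle u_1\rangle * \cdots * \langle u_p\rangle * \langle d_1,\ldots,d_q\rangle$ requires either the moment--cumulant machinery for non-crossing partitions or a careful induction on the length of alternating words. You acknowledge this as ``the main obstacle,'' and it genuinely is; filling it in is where most of the work lies.
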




\begin{fact}\cite[Corollary 18.17]{speicherbook}\label{ft:stransform}
	Let $(\mathcal A,\vp)$ be a non-commutative probability space, and let $a,b$ be in $\mathcal A$ such that $\vp(a), \vp(b)\neq 0$. If $a$ and  $b$ are free, then:
	\begin{align*}
		S_{ab}(\lambda)=S_a(\lambda)S_b(\lambda),
	\end{align*}
	in the neighbourhood of the origin in the complex plane where $S_a(\lambda)$ and $S_b(\lambda)$ are well defined.
\end{fact}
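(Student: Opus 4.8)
The $S$-transform is an invariant of the $\ast$-distribution that depends only on the moment sequence $(\vp(a^n))_{n\ge 1}$, and since $\vp(a),\vp(b)\neq 0$ the series $M_a$, $M_b$, $M_{ab}$ each have nonzero linear coefficient, so their compositional inverses—and hence $S_a,S_b,S_{ab}$—exist as formal power series (and as genuine analytic functions in a neighbourhood of $0$). The plan is therefore to reduce the identity $S_{ab}=S_aS_b$ to a single identity between formal power series and to prove that identity combinatorially. First I would record that, by freeness, every mixed moment, in particular $\vp((ab)^n)$, is a universal polynomial in the individual moments $\vp(a^j),\vp(b^j)$; thus it suffices to prove the product formula at the level of these universal polynomials, for which I may work entirely within the framework of free cumulants and non-crossing partitions developed in \cite[Lectures 11--15]{speicherbook}.

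The combinatorial heart is a formula expressing $\vp((ab)^n)$ through the Kreweras complement. Expanding $\vp((ab)^n)=\vp(abab\cdots ab)$ by the moment--cumulant relation sums free cumulants over all $\pi\in NC(2n)$ whose $2n$ legs are decorated alternately by $a$ and $b$. Freeness of $a$ and $b$ forces every mixed free cumulant to vanish, so only those $\pi$ survive whose blocks never mix $a$-legs with $b$-legs; writing the restriction to the $a$-legs as a partition $\sigma\in NC(n)$, the non-crossing constraint pins down the admissible partition of the $b$-legs to be exactly the Kreweras complement $K(\sigma)$. This yields the key identity
\begin{align*}
\vp((ab)^n)=\sum_{\sigma\in NC(n)}\kappa_\sigma[a]\,\kappa_{K(\sigma)}[b],
\end{align*}
where $\kappa_\sigma[a]=\prod_{V\in\sigma}\kappa_{|V|}(a,\ldots,a)$ and similarly for $b$. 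Proving this factorization cleanly—carefully tracking the alternating decoration and verifying that the surviving configurations are precisely the complementary pairs $(\sigma,K(\sigma))$—is the step I expect to be the main obstacle.

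The final step converts the Kreweras-complement sum into an honest product of generating functions. The defining property of $K$ is that it is an order-reversing involution on the lattice $NC(n)$ with $|\sigma|+|K(\sigma)|=n+1$, and summing a product $\kappa_\sigma[a]\,\kappa_{K(\sigma)}[b]$ over such complementary pairs is exactly the operation that the compositional inversion built into the $S$-transform linearizes. Concretely, I would set $\chi_a=M_a^{<-1>}$, so that $S_a(z)=\frac{1+z}{z}\chi_a(z)$, and show that the moment identity above is equivalent, after the substitution $z\mapsto M_{ab}(z)$ together with the relation between $M$ and its inverse, to
\begin{align*}
\chi_{ab}(z)=\frac{z}{1+z}\,S_a(z)\,S_b(z),
\end{align*}
which upon multiplying by $\frac{1+z}{z}$ is precisely $S_{ab}(z)=S_a(z)S_b(z)$. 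Multiplicativity in a neighbourhood of the origin, where all the inverses are analytic, then follows by the identity theorem. The only analytic subtlety is the existence of these inverses, already guaranteed by $\vp(a),\vp(b)\neq0$; all the substance lies in the combinatorial factorization of the preceding paragraph. (An alternative would be Haagerup's analytic derivation via the Cauchy transform and a change of variables, but the combinatorial route is cleaner to present self-contained.)
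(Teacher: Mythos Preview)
The paper does not prove this statement at all: it is stated as a Fact and simply cited from \cite[Corollary 18.17]{speicherbook}. So there is no ``paper's own proof'' to compare against.

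That said, your outline is essentially the standard combinatorial proof appearing in the cited reference. The key identity
\[
\vp((ab)^n)=\sum_{\sigma\in NC(n)}\kappa_\sigma[a]\,\kappa_{K(\sigma)}[b]
\]
via vanishing of mixed cumulants and the Kreweras complement is exactly the content of \cite[Theorem 14.4]{speicherbook}, and the passage from this to $S_{ab}=S_aS_b$ via compositional inverses is how Lecture 18 of that book proceeds. Your sketch is correct, though the step you flag as ``the main obstacle'' (that the surviving configurations are the complementary pairs $(\sigma,K(\sigma))$) is already a named theorem in the reference, so in context one would just cite it rather than reprove it.
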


\begin{fact}\cite[p. 78]{speicherbook}\label{ft:freewithu}
Let $(\mathcal A, \vp)$ be a $*$-probability space. Consider a unital sub-algebra $\mathcal B\subset \mathcal A$ and a Haar unitary $u\in \mathcal A$ such that $\{u,u^*\}$ and $\mathcal B$ are free. Let $u\mathcal B u^*=\{ubu^*: \; b\in \mathcal B\}$. Then $\mathcal B$ and $u\mathcal B u^*$ are free.
\end{fact}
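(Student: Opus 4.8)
The plan is to verify freeness directly from its definition: two unital subalgebras are free when every alternating product of centered elements (elements killed by $\vp$) is itself centered. Write $\mathcal U$ for the unital subalgebra generated by $u$ and $u^*$; since $u$ is Haar unitary, $\mathcal U$ is the linear span of $\{u^j : j\in\Z\}$, and its centered elements are the span of $\{u^j : j\neq 0\}$, so in particular both $u$ and $u^*$ are centered. I take the hypothesis to be precisely that $\mathcal U$ and $\mathcal B$ are free, and I want to conclude that $\mathcal B$ and $u\mathcal B u^*$ are free. Note that traciality of $\vp$ is not needed anywhere.

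First I would identify the centered elements of $u\mathcal B u^*$. For $b\in\mathcal B$, decompose $b=\vp(b)\one+b^\circ$ with $\vp(b^\circ)=0$; then
$$\vp(ubu^*)=\vp(b)\,\vp(uu^*)+\vp(u b^\circ u^*)=\vp(b)+\vp(u b^\circ u^*).$$
The word $u\,b^\circ\,u^*$ is alternating between $\mathcal U$ and $\mathcal B$ with all three factors centered, so freeness of $\mathcal U$ and $\mathcal B$ forces $\vp(u b^\circ u^*)=0$, giving $\vp(ubu^*)=\vp(b)$. Hence $ubu^*$ is centered in $u\mathcal B u^*$ if and only if $b$ is centered in $\mathcal B$, and every centered element of $u\mathcal B u^*$ has the form $u b^\circ u^*$ with $\vp(b^\circ)=0$.

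The heart of the argument is then a single substitution. Consider an arbitrary alternating product of centered elements drawn from $\mathcal B$ and $u\mathcal B u^*$; replacing each $u\mathcal B u^*$-factor by $u b_j^\circ u^*$ and writing each centered $\mathcal B$-factor as $a_i$, the product becomes a word of the shape
$$\cdots a_i\, u\, b_j^\circ\, u^*\, a_{i+1}\, u\, b_{j+1}^\circ\, u^*\cdots,$$
with the boundary factors $a_i$ possibly absent. I would observe that, read letter by letter, this word alternates between $\mathcal U$ (the single letters $u$ or $u^*$) and $\mathcal B$ (the letters $b_j^\circ$ and $a_i$): a $u^*$ is always separated from the next $u$ by some centered $a_i$, and a $u$ is separated from its matching $u^*$ by the centered $b_j^\circ$. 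Since each $u$, each $u^*$, and each $a_i,b_j^\circ$ is centered, freeness of $\mathcal U$ and $\mathcal B$ yields that $\vp$ of the whole word vanishes. This is exactly the centering condition required of an alternating product from $\mathcal B$ and $u\mathcal B u^*$, so the two algebras are free.

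The step requiring the most care is the bookkeeping in the substitution: one must check that the expanded word is genuinely alternating in $\mathcal U$ and $\mathcal B$ for every starting and ending pattern (words beginning or ending with a $u\mathcal B u^*$-factor simply begin with $u$ or end with $u^*$), and that no collapse $uu^*=\one$ is ever triggered, which holds precisely because a centered $\mathcal B$-letter always sits between a $u^*$ and the following $u$. Once this is confirmed, the Haar unitarity of $u$ (ensuring $u$ and $u^*$ are centered) together with the freeness of $\mathcal U$ and $\mathcal B$ closes the argument with no further computation.
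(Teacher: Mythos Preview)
Your proof is correct and is precisely the standard argument; in fact it is essentially the proof given in the cited reference \cite[p.~78]{speicherbook}. Note, however, that the paper itself does not prove this statement at all: it records it as a Fact with a citation and uses it as a black box in the proof of Lemma~\ref{lem:stransform}, so there is no in-paper proof to compare against.
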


We use the following notation: for a $m\times m$ matrix $B_m$,
\begin{align*}
\widetilde {B}_m&:=\l[\begin{array}{cc}
B_m & 0\\ 0 & 0
\end{array}\r]_{n\times n}.
\end{align*}

\begin{proof}[Proof of Lemma \ref{lem:stransform}]
Suppose $B_{n_1}=A_1\cdots A_k$. Let $\widetilde B_{n_1}\stackrel{*\mbox{\tiny -dist}}{\longrightarrow }\tilde a$ almost surely, for some $\tilde a \in (\widetilde{\mathcal A}, \tilde \vp)$, with respect to $\vp_n$ as $n\to \infty$. Then, for $\e_1,\ldots, \e_k\in \{1, *\}$, 
\begin{eqnarray*}
\vp(a^{\e_1}\cdots a^{\e_k})&=&\lim_{n\to \infty }\vp_{n_1}(B_{n_1}^{\e_1}\cdots B_{n_1}^{\e_p})\\
&=&\lim_{n\to \infty}\frac{n}{n_1}\vp_n(\widetilde B_{n_1}^{\e_1}\cdots \widetilde B_{n_1}^{\e_p})\\
&=&\alpha_1 \tilde\vp (\tilde a^{\e_1}\cdots \tilde a^{\e_k}).
\end{eqnarray*}
	In particular, we have 
	\begin{align*}
	\vp((aa^*)^k)=\alpha_1 \tilde\vp((\tilde a \tilde a^*)^k),\ \text{for all positive integers}\ k.
	\end{align*}
 Therefore, for $\lambda\in \C$, 
	$$
	M_{aa^*}(\lambda)=\sum_{k=1}^{\infty}\vp((aa^*)^k)\lambda^k=\alpha_1\sum_{k=1}^{\infty}\tilde\vp((\tilde a\tilde a^*)^k)\lambda^k=\alpha_1 M_{\tilde a\tilde a^*}(\lambda).
	$$
	Hence $M_{aa^*}^{<-1>}(\lambda)=M_{\tilde a\tilde a^*}^{<-1>}(\frac{\lambda}{\alpha_1})$. Therefore the $S$-transform of $aa^*$ is given by
	\begin{align}\label{eqn:1}
	S_{aa^*}(\lambda)=\frac{1+\lambda}{\lambda}M_{aa^*}^{<-1>}(\lambda)=\frac{1+\lambda}{\lambda}M_{\tilde a\tilde a^*}^{<-1>}\l(\frac{\lambda}{\alpha_1}\r)=\frac{1+\lambda}{\alpha_1+\lambda}S_{\tilde a\tilde a^*}\l(\frac{\lambda}{\alpha_1}\r).
	\end{align}
 Note that $\widetilde I_{n_1},\ldots, \widetilde I_{n_k}$ converge jointly with respect to $\vp_n$ as $n\to \infty$, i.e.,
	$$\widetilde I_{n_1},\ldots, \widetilde I_{n_k}\stackrel{*\mbox{\tiny -dist}}{\longrightarrow} b_1,\ldots, b_k
	$$ 
 where  $b_1,\ldots, b_k\in (\widetilde{\mathcal A},\tilde \vp)$ are such that $\tilde \vp(b_i^{p})=\frac{1}{\alpha_i}$ for all $p\in \N$ and $i=1,\ldots,k$.
 Therefore, by Fact \ref{ft:free}, we have
 \begin{align*}
 U_n^{(1)},\ldots, U_n^{(k)}, \widetilde{I}_{n_1},\ldots, \widetilde I_{n_k}\stackrel{*\mbox{\tiny -dist}}{\longrightarrow} u_1,\ldots,u_k,b_1,\ldots,b_k\;\mbox{ almost surely,}
 \end{align*}
 where $u_1,\ldots, u_k$ are Haar unitary and free with $\{b_1,\dots,b_k\}$. In particular, we get
	\begin{align*}
	\widetilde B_{n_1}=\widetilde I_{n_1}U_n^{(1)}\widetilde I_{n_2}U_n^{(2)}\widetilde I_{n_3}\cdots \widetilde I_{n_k}U_n^{(k)}\widetilde I_{n_1}\stackrel{*\mbox{\tiny -dist}}{\longrightarrow}
	b_1u_1b_2u_2\cdots b_ku_kb_1 \;\mbox{almost surely},
	\end{align*}
	 Therefore we obtain 
	 \begin{align*}
	 \tilde a=b_1u_1b_2u_2\cdots b_ku_kb_1 \mbox{ and } \tilde a\tilde a^*=b_1u_1b_2u_2\cdots b_ku_kb_1u_k^*b_k\cdots u_2^*b_2u_1^*b_1.
	  \end{align*}

	Now we calculate the $S$-transform. For the ease of writing and for clarity, we restrict to the case $k=2$. Applying  Facts \ref{ft:stransform} and \ref{ft:freewithu} repeatedly, we have
	\begin{eqnarray*}
S_{b_1u_1b_2u_2b_1u_2^*b_2u_1^*b_1}(\lambda)&	= &S_{b_1^2}(\lambda)S_{u_1b_2u_2b_1u_2^*b_2u_1^*}(\lambda)
	\\ &=&S_{b_1}(\lambda)S_{b_2u_2b_1u_2^*b_2}(\lambda)\\ &=&S_{b_1}(\lambda)S_{b_2^2}(\lambda)S_{u_2b_1u_2^*}(\lambda)
	\\ &=&S_{b_1}(\lambda)S_{b_2}(\lambda)S_{b_1}(\lambda)
	\end{eqnarray*}
	Similarly, applying  Facts \ref{ft:stransform} and \ref{ft:freewithu}  repeatedly, we get 
	\begin{align}\label{eqn:2}
		S_{\tilde a\tilde a^*}(\lambda)=S_{b_1}(\lambda)S_{b_2}(\lambda)\cdots S_{b_k}(\lambda)S_{b_1}(\lambda).
	\end{align}
	It is easy to see that, for $i=1,2,\ldots, k $ and $\lambda\in \D$, we have  
	\begin{align}\label{eqn:3}
	M_{b_i}(\lambda)=\frac{\lambda}{\alpha_i(1-\lambda)}, \;\;
	M_{b_i}^{<-1>}(\lambda)=\frac{\alpha_i \lambda}{1+\alpha_i \lambda}, \mbox{ and } S_{b_i}(\lambda)=\frac{\alpha_i(1+\lambda)}{1+\alpha_i \lambda}.
	\end{align}
	Therefore, using \eqref{eqn:2} and \eqref{eqn:3} in \eqref{eqn:1}, we get 
	$$
	S_{aa^*}(\lambda)=\prod_{i=1}^{k}\frac{\alpha_i(\lambda+\alpha_1 )}{\alpha_1+\alpha_i\lambda},\;\mbox{ for all $\lambda \in \D$}.
	$$ 
	Hence, the lemma is proved.
\end{proof}

Finally we proceed to prove Theorem \ref{thm:1}.  

\begin{proof}[Proof of Theorem \ref{thm:1}]
Let $a$ and $\tilde a$ be as in the proof of Lemma \ref{lem:stransform}.
Then
\begin{align*}
\vp(a^{\e_1}\cdots a^{\e_k})
=\alpha \tilde\vp (\tilde a^{\e_1}\cdots \tilde a^{\e_k}).
\end{align*}
  Therefore, applying Fact \ref{ft:freewithu} repeatedly, we have
\begin{eqnarray}\label{eqn:mean}
\vp(\tilde a \tilde a^*)&=&\vp(b_1u_1b_2u_2\cdots b_ku_kb_1u_k^*b_{k}\cdots u_2^*b_2u_1^*b_1)\nonumber \\
&=&\frac{1}{\alpha_1\cdots \alpha_k\alpha_1}.
\end{eqnarray}

Since the Brown measure and the LSD of bi-unitary invariant matrices are same (see \cite[Remark 8]{manjusinglering}), it is enough to calculate the Brown measure of $a$. However,  $a$ is $R$-diagonal by Lemma \ref{lem:rdiagonal}. Therefore Fact \ref{ft:brownR} implies that the Brown measure $\mu_k$ of $a$ is rotationally invariant, and the distribution of its radial part is given by
\begin{align*}
\mu_k(\{z\suchthat |z|\le t\})=1+S_{aa^*}^{<-1>}(t^{-2}),\; \mbox{ for $\frac{1}{\sqrt{\vp((aa^*)^{-1})}}\le t\le \sqrt{\vp(aa^*)}$}.
\end{align*} 
 Again we have $\vp((aa^*)^{-1})=\infty$ and $\vp(aa^*)=\alpha_{1}\tilde\vp(\tilde a \tilde a^*)=\frac{1}{\alpha_1\cdots \alpha_k}$ (from \eqref{eqn:mean}).   Hence the result folllows upon  using Lemma \ref{lem:stransform}.
  
Now suppose that $\alpha_1=\cdots=\alpha_k=\alpha$. Then we have
\begin{align*}
S_{aa^*}(\lambda)=\l(\frac{\alpha+\lambda}{\lambda+1}\r)^k,\;\mbox{and }\; S_{aa^*}^{<-1>}(\lambda)=\frac{\alpha -\lambda^{\frac{1}{k}}}{\lambda^{\frac{1}{k}}-1}.
\end{align*}
Hence the distribution of the radial part is 
\begin{eqnarray*}
\mu_a(\{z\in \C\;:\; |z|\le t\})&=&1+S_{aa^*}^{<-1>}(t^{-2})\\
&=&\frac{(\alpha -1)t^{\frac{2}{k}}}{1-t^{\frac{2}{k}}},\;\mbox{ for $0\le t\le (\frac{1}{\alpha})^{\frac{k}{2}}$}.
\end{eqnarray*} 
 Therefore  the density of the radial part of the LSD is
\begin{align*}
\frac{2(\alpha-1)}{k}\frac{t^{\frac{2}{k}-1}}{(1-t^{\frac{2}{k}})^2}, \;\mbox{ for $0\le t\le (\frac{1}{\alpha})^{\frac{k}{2}}$}.
\end{align*}
The result now follows from the fact that  $\mu_k$ is rotationally invariant.
\end{proof}

\begin{proof}[Proof of Corollary \ref{cor:1}]
 Let $R_k$ denote the distribution  of the radial part of the LSD of $A_m^{(1)}\cdots A_m^{(k)}$. Then, by Theorem \ref{thm:1}, 
\begin{align*}
\P(R_k\le t)=\l\{\begin{array}{lcr}
\frac{(\alpha -1)t^{\frac{2}{k}}}{1-t^{\frac{2}{k}}} & \mbox{for} &0\le t\le (\frac{1}{\alpha})^{\frac{k}{2}}\\
& \\
1 & \mbox{for} & t \ge (\frac{1}{\alpha})^{\frac{k}{2}}.
\end{array}\r.
\end{align*}
Let $U$ be the uniform random variable in $[0,1]$. Then, for $ 0\le t\le (\frac{1}{\alpha})^{\frac{k}{2}}$, we have
\begin{eqnarray*}
\P(R_k^2\le t)&=&\frac{(\alpha -1)t^{\frac{1}{k}}}{1-t^{\frac{1}{k}}}\\
&=&\P\big(U\le \frac{(\alpha -1)t^{\frac{2}{k}}}{1-t^{\frac{2}{k}}}\big)\\
&=&\P\big(\big(\frac{U}{\alpha-1+U}\big)^k\le t\big).
\end{eqnarray*}
Hence the result.
\end{proof}

\bibliographystyle{amsplain}

\end{document}